%%%%%%%%%%%%%%%%%%%%%%%%%%%%%%%%%%%%%%%%%%%%%%%%%%%%%%%%%%%%%%%%
%
%         Semi-Bloch functions in several complex variables 
%                           
%                      Ulf Backlund
%			Linus Carlsson
%                      Anders F\"allstr\"om
%			Håkan Persson
%           
%
%                            LaTex 2e 
%
%%%%%%%%%%%%%%%%%%%%%%%%%%%%%%%%%%%%%%%%%%%%%%%%%%%%%%%%%%%%%%%%%
\documentclass[psamsfonts,english,a4paper,10pt]{amsart}
\usepackage[mathscr]{eucal}
\usepackage{amssymb}
\usepackage{amscd}
\usepackage{babel}
\selectlanguage{english}
\usepackage{color}

\usepackage{hyperref}
\renewcommand{\phi}{\varphi}

\usepackage{amsthm,amssymb,amsmath,enumerate}

\newcommand{\english}{
% AMS theorem styles
\theoremstyle{plain}
\newtheorem{theorem}{Theorem}[section]
\newtheorem*{theorem*}{Theorem}
\newtheorem{lemma}[theorem]{Lemma}
\newtheorem{proposition}[theorem]{Proposition} 
\newtheorem*{proposition*}{Proposition}
\newtheorem{corollary}[theorem]{Corollary} 
\newtheorem*{corollary*}{Corollary}
\newtheorem{conjecture}[theorem]{Conjecture} 
\theoremstyle{definition}
\newtheorem{definition}[theorem]{Definition} 
\newtheorem*{definition*}{Definition}
\newtheorem{example}[theorem]{Example}
\newtheorem{exercise}[theorem]{Exercise}
\newtheorem{assumption}[theorem]{Assumption}
\newtheorem{notation}[theorem]{Notation}
\newtheorem*{problem}{Problem}
\theoremstyle{remark}
\newtheorem{remark}{Remark}
\newtheorem{case}{Case}
\newtheorem*{claim}{Claim}
}

%
% From the ~leifp/texmf/OSCA.cls file and some additions:
\newcommand{\beq}{\begin{equation}}
\newcommand{\eeq}{\end{equation}}
\newcommand{\beqn}{\begin{equation}}
\newcommand{\eeqn}{\end{equation}}
\newcommand{\beqa}{\begin{eqnarray}}
\newcommand{\eeqa}{\end{eqnarray}}
\newcommand{\beqan}{\begin{eqnarray}}
\newcommand{\eeqan}{\end{eqnarray}}
\newcommand{\ben}{\begin{enumerate}}
\newcommand{\een}{\end{enumerate}}
\newcommand{\bit}{\begin{itemize}}
\newcommand{\eit}{\end{itemize}}
\newcommand{\bce}{\begin{center}}
\newcommand{\ece}{\end{center}}
%newcommand{\bsli}{}
%\newcommand{esli}{}
\newcommand{\bsli}{\outlineonly{\begin{slide}}}
\newcommand{\esli}{\outlineonly{\end{slide}}}
%
% differential operators

% my notation:
% calligraphic letters

% blackboard bold letters
%\newcommand{\AA}{\mathbb{A}}

%\newcommand{\SS}{\mathbb{S}}

% boldface letters

% mathematical notation

%
% Adapted from  the ~leifp/texmf/LangtangensMacros.tex file:
%-----------------------------------------------------------------------------
% different styles when compiling the book or the tutorial:
% \author* includes all comments (\job, \obs), labels etc
% \official* excludes all comments and preliminary sections
%

%

%

\newcommand{\ignore}[1]{}

 \english
 \newcommand{\D}{\mathbb D}
 
\begin{document}
\numberwithin{equation}{section}

\title[Semi-Bloch functions in several complex variables]
{Semi-Bloch functions in \\ several complex variables}

\author[U. Backlund]{Ulf Backlund}

\address{Ulf Backlund \\ Danderyds Gymnasium \\ 
 S-182 36 Danderyd,  Sweden}
\email{ulf.backlund@dagy.danderyd.se} 
 
 \author[L. Carlsson]{Linus Carlsson}
 
 \address{Linus Carlsson \\ Academy of Culture and Communication \\  M\"{a}lardalen University \\ Box 883, S-721 23 V\"{a}ster\aa{}s, Sweden}
\email{linus.carlsson@mdh.se}
\subjclass[2010]{32A18, 30D45}

\author[A. F\"allstr\"om]{Anders F\"allstr\"om}

\address
{Anders F\"allstr\"om \\ Department of Mathematics and mathematical Statistics \\ Ume\aa{} University \\
S-901 87 Ume{\aa},  Sweden}
\email{Anders.Fallstrom@math.umu.se}

\author[H. Persson]{H\aa{}kan Persson}
\address
{H\aa{}kan Persson \\ Department of Mathematics \\Uppsala University \\ S-751 06 Uppsala, Sweden}\email{hakan.persson@math.uu.se}

\begin{abstract}
Let $M$ be an $n$-dimensional complex manifold. A holomorphic function 
$f:M\to {\mathbb C}$ is said to be semi-Bloch if for every 
$\lambda\in{\mathbb C}$ the function $g_{\lambda}=\text{exp}(\lambda f(z))$ is 
normal on $M$. We characterise Semi-Bloch functions on infinitesimally Kobayashi non-degenerate $M$ in geometric as well as analytic terms. Moreover, 
we show that on such manifolds, semi-Bloch functions are normal.
\end{abstract}

\maketitle

%%%%%%%%%%%%%%
% Introduktion
%%%%%%%%%%%%%%%
\section{Introduction}

The purpose of this paper is to define and investigate semi-Bloch 
functions on $n$-dimensional complex manifolds.  The class of semi-Bloch functions 
on the unit disk in the complex plane form a a natural class between the Bloch functions and the normal functions. They were introduced by F. 
Colonna in \cite{Colonna}, when studying the normality of functions omitting two distinct values. The study was continued by Aulaskari and Lappan, see \cite{AL}, who managed to characterise the class of semi-Bloch functions in the one-dimensional unit disk in analytic as well as geometric terms. S. G. Krantz, see in \cite[p. 121]{KrantzBok}, made a remark on the higher dimensional generalisation of this class and later, M. R. Pouryayeval initiated the study of semi-Bloch functions in higher dimensions. He considered the class of semi-Bloch functions on bounded domains in $\mathbb C ^n$, and managed to extend the analytic characterisation of Aulaskari and Lappan such domains. However, the geometrical characterisation was left open, \cite[p. 65]{P}.

In this paper, we continue this line of research and extend the full Aulaskari-Lappan characterisation to a large class of $n$-dimensional complex manifolds which includes all bounded domains in $\mathbb C^n$, but also many unbounded domains and all Kobayashi hyperbolic complex manifolds. On the way, we show that some fundamental properties of Bloch functions and normal  functions are readily extended to above-mentioned class of complex manifolds. 
 
%%%%%%%%%%%%%%
% Definitions
%%%%%%%%%%%%%%%

\section{Definitions and elementary properties.}
 
Throughout this paper, $M$ will denote an $n$-dimensional complex manifold and  
$\hat{\mathbb C}$ will denote the Riemann sphere.
For a point 
$z\in M$ we denote by $T_{z}(M)$ the complex tangent space 
of $M$ at $z$.
Notice that for a given point $\zeta\in{\mathbb C}$, the tangent space 
$T_\zeta({\mathbb C})$ can in a natural way be identified with 
$T_\zeta(\hat{\mathbb C})$.
For $z\in M$, $N$ a complex manifold and $f: M \to N$ a holomorphic mapping, we denote by $f_*(z)$ the differential mapping $T_z(M)\to 
T_{f(z)}(N)$.

In this paper the unit disk in the complex plane will be denoted by 
$\D$ and a holomorphic mapping from $\D$ to $M$ will be called an analytic disk in $M$. For such a mapping $\phi$, we denote by $\phi'(\zeta)$ the tangent vector $\phi'(\zeta)=\phi_*(\zeta)(e)$, where $e$ is the canonical basis of $T_0\mathbb D \cong \mathbb C$.

For each $z \in M$ and $\xi \in T_z(M)$, the infinitesimal form of the Kobayashi-Royden pseudometric, $k_{M}(z,\xi)$, is defined by
\[
 k_{M}(z,\xi) = \inf\bigg\{\frac{1}{|a|} \biggm| 
\phi 
 \text{ is 
an analytic disk}, 
 \phi(0)=z,
 \phi'(0)=a\xi \bigg\}.
\]
For more background information on the Kobayashi-Royden pseudometric, see \cite{JP}, \cite{Kob} and \cite{R}. 

\begin{definition}
	A complex manifold $M$ is called \emph{infinitesimally Kobayashi non-degenerate} if 
	\[
		k_M(z,\xi)>0, \quad \forall z \in M, \xi \in T_z(M)\setminus \{0\}.
	\]
\end{definition}
It was proved by Royden, see \cite{R}, that all Kobayashi hyperbolic complex manifolds are infinitesimally Kobayashi non-degenerate; note however that there are infinitesimally Kobayashi non-degenerate domains in $\mathbb C^n$ which are not hyperbolic, see \cite[Remark 3.5.11]{JP}. The most elementary example of a Kobayashi hyperbolic complex manifold is a bounded domain in $\mathbb C^n$, as follows from the Cauchy estimates. Most of the results of this paper, including condition $(ii]$ of the main theorem, are new already in this setting.

The infinitesimal form of the
spherical metric on $\hat{\mathbb C}$ is defined by
$$\chi(z,\xi)=\frac{2|\xi|}{1+|z|^2}$$
for $z\in \hat{\mathbb C}$ and $\xi\in T_z(\hat{\mathbb C})$.
A space $X$ endowed with a pseudometric $\rho$ will be denoted by 
$(X,\rho)$.

The notion of a schlicht disk is central to the
study of Bloch functions. In higher dimension the appropriate 
definition is as follows:

\begin{definition}
Let $f$ be a holomorphic 
function on $M$. A disk $\D(w_0,r)=\{w\in\mathbb C:|w-w_0|<r\}$ 
is said to be a \emph{schlicht disk in the range of $f$}, if there exists a 
holomorphic function $h:\D\to M$, such that $f\circ h:\D\to \D(w_0,r)$ is biholomorphic.
\end{definition}
Note that for $M=\mathbb C$ this definition coincides with the classical 
definition of a schlicht disk.

The least upper bound of the radii of schlicht disks in the range of a 
holomorphic function $f$ will play a significant role. We denote by 
\[
  d_f(z)=\sup\{r: \D(f(z),r) \text{ is a schlicht disk in the range of 
} f\}.
\] 
\section{Bloch and Normal functions}\label{Blochavsnitt}

In this section we recall the definitions and some facts about Bloch 
and normal functions in higher dimension.
\begin{definition}
A holomorphic function $f:M\to{\mathbb C}$ 
is said to be Bloch if the mapping $f_*(z)$ 
is bounded from $(T_z(M),k_M)$ to $(T_{f(z)}({\mathbb 
C}),|\ |)$, uniformly in $z\in M$.
In other words, $f$ is Bloch if there exists a constant $C$ such that
\begin{equation}\label{eq:bloch_defn}
|f_*(z)(\xi)|\le C k_{M}(z,\xi)
\end{equation}
for all $z\in M$ and  
$\xi\in T_{z}(M)$
\end{definition}
The set of all Bloch functions on $M$ is denoted by
${\mathcal B}(M)$. It is obvious from the definition that $\mathcal B(M)$ has the natural structure of a vector space,  and in some cases it can even be given the structure of a Banach space; see for example \cite{ACP} for the classical case or \cite{T1} and \cite{T2} for an extension to $\mathbb C^n$.

Bloch functions can be characterised by the size of schlicht disks in their range. The following theorem was previously proved for bounded homogeneous domains by Timoney, see \cite{T1}, and by Krantz and Ma, see \cite{KraMa}, for strongly pseudoconvex domains.

\begin{theorem}
	Suppose that $f$ is a holomorphic function on an infinitesimally Kobayashi non-degenerate $M$. Then the following are equivalent:
		\begin{enumerate}[(i)]
			\item
				$f$ is Bloch;
			\item
				$\displaystyle\sup_{z \in M}d_f(z) < \infty$.
		\end{enumerate}
\end{theorem}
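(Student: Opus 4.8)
The plan is to prove the two implications separately, in each case transferring the question to the one-variable unit disk by composing $f$ with a holomorphic map $\mathbb{D}\to M$ and exploiting the distance-decreasing property of the Kobayashi--Royden metric together with the Schwarz--Pick lemma.

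\emph{Proof that $(i)\Rightarrow(ii)$.} Suppose $f$ is Bloch with constant $C$. I would show that every schlicht disk in the range of $f$ has radius at most $C$. So let $\mathbb{D}(f(z_0),r)$ be such a disk, witnessed by a holomorphic $h:\mathbb{D}\to M$ with $F:=f\circ h:\mathbb{D}\to\mathbb{D}(f(z_0),r)$ biholomorphic. By the chain rule $F'(\zeta)=f_*(h(\zeta))(h'(\zeta))$, and since $h$ is holomorphic the distance-decreasing property gives $k_M(h(\zeta),h'(\zeta))\le k_{\mathbb{D}}(\zeta,1)=(1-|\zeta|^2)^{-1}$; hence
\[
  (1-|\zeta|^2)\,|F'(\zeta)| = (1-|\zeta|^2)\,|f_*(h(\zeta))(h'(\zeta))| \le C\,(1-|\zeta|^2)\,k_M(h(\zeta),h'(\zeta)) \le C ,
\]
so $F$ has classical Bloch seminorm at most $C$. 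On the other hand $F=f(z_0)+r\psi$ for some $\psi\in\operatorname{Aut}(\mathbb{D})$, and at a point $\zeta_*$ with $\psi(\zeta_*)=0$ the Schwarz--Pick equality gives $(1-|\zeta_*|^2)|F'(\zeta_*)|=r$. Therefore $r\le C$, and $\sup_{z\in M}d_f(z)\le C<\infty$.

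\emph{Proof that $(ii)\Rightarrow(i)$.} Put $R:=\sup_{z\in M}d_f(z)<\infty$. Fix $z\in M$ and $\xi\in T_z(M)$ with $f_*(z)(\xi)\ne 0$ (otherwise \eqref{eq:bloch_defn} is trivial), and let $\phi$ be any analytic disk with $\phi(0)=z$, $\phi'(0)=a\xi$, $a\ne0$. Then $F:=f\circ\phi:\mathbb{D}\to\mathbb{C}$ is holomorphic with $F'(0)=a\,f_*(z)(\xi)\ne0$, so by Bloch's theorem there is a subdomain of $\mathbb{D}$ on which $F$ is injective whose image contains a disk $\mathbb{D}(w_0,\rho)$ with $\rho\ge\mathcal{B}\,|F'(0)|$, where $\mathcal{B}>0$ is Bloch's constant. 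Re-parametrising the corresponding univalent piece by all of $\mathbb{D}$ via the Riemann mapping theorem and composing with $\phi$ exhibits $\mathbb{D}(w_0,\rho)$ as a genuine schlicht disk in the range of $f$; moreover $w_0$ lies in $f(M)$, so $\rho\le d_f(w_0\text{-preimage})\le R$. Thus $\mathcal{B}\,|a|\,|f_*(z)(\xi)|\le R$ for every admissible $\phi$, and taking the infimum of $1/|a|$ gives $|f_*(z)(\xi)|\le(R/\mathcal{B})\,k_M(z,\xi)$, i.e.\ $f\in\mathcal{B}(M)$.

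The routine ingredients---holomorphic invariance of $k_M$, the Poincar\'e density $k_{\mathbb{D}}(\zeta,1)=(1-|\zeta|^2)^{-1}$, Schwarz--Pick, and the classical Bloch theorem---I would simply quote. The step I expect to need the most care is the second one: Bloch's theorem only produces a univalent \emph{branch} of $F$ over a disk, whereas the paper's notion of a schlicht disk in the range of $f$ requires a map defined on all of $\mathbb{D}$; reconciling these is where the simple-connectedness bookkeeping and the Riemann mapping theorem enter, and one also has to observe that the centre of the disk obtained this way automatically belongs to $f(M)$, so that it is controlled by $\sup_z d_f(z)$ and not merely by some a priori larger quantity. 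I note in passing that infinitesimal Kobayashi non-degeneracy does not seem to be used in either direction of this particular statement; it is presumably included for uniformity with the later results.
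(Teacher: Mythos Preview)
Your proof is correct and follows essentially the same route as the paper: the implication $(i)\Rightarrow(ii)$ is the paper's Lemma~\ref{olikhet} (the paper simply normalises so that $f\circ h(\zeta)=w_0+r\zeta$ and reads off $r$ at $\zeta=0$, which is a slightly quicker version of your Schwarz--Pick computation), and $(ii)\Rightarrow(i)$ is the paper's Lemma~\ref{lem:bloch_konst}, also via Bloch's theorem applied to $f\circ\phi$.

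One point worth flagging: your closing remark that non-degeneracy is not actually used is correct, and in fact your argument for $(ii)\Rightarrow(i)$ is marginally sharper than the paper's. The paper explicitly invokes $k_M(z,\xi)>0$ to pick a single near-extremal disk $\phi$ with $1/a_0\le(1+\epsilon)k_M(z,\xi)$, whereas you bound $\mathcal{B}\,|a|\,|f_*(z)(\xi)|\le R$ for \emph{every} competitor and then pass to the infimum of $1/|a|$; this yields $|f_*(z)(\xi)|\le(R/\mathcal{B})\,k_M(z,\xi)$ even when $k_M(z,\xi)=0$ (both sides vanish). So the theorem in fact holds on any complex manifold. Your care about reconciling the univalent branch from Bloch's theorem with the paper's definition of a schlicht disk (via the Riemann map) is also warranted---the paper suppresses this step inside the inequality $\sup_\zeta d_g(\zeta)\le\sup_w d_f(w)$.
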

We divide the proof of the theorem into two lemmas of independent interest.
\begin{lemma}\label{olikhet}
Let $f$ be a 
holomorphic function on $M$. Then 
\[d_f(z)\le \sup\left\{\frac{|f_*(z)(\xi)|}{k_{M}(z,\xi)} : \xi\in T_{z}(M)\right\}.\]
\end{lemma}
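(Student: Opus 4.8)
The plan is to estimate the radius of an arbitrary schlicht disk in the range of $f$ in terms of the pointwise "derivative ratio" $|f_*(z)(\xi)|/k_M(z,\xi)$. Fix $z\in M$ and let $\D(f(z),r)$ be a schlicht disk in the range of $f$, so that there is a holomorphic $h\colon\D\to M$ with $f\circ h\colon\D\to\D(f(z),r)$ biholomorphic. After precomposing $h$ with a holomorphic automorphism of $\D$ we may assume $h(0)=z$, so that $(f\circ h)(0)=f(z)$. The key object is the holomorphic function $\psi=f\circ h\colon\D\to\D(f(z),r)$, a biholomorphism of the disk onto a disk of radius $r$ centered at its value at $0$.

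First I would apply the Schwarz–Pick lemma (equivalently, the classical fact that a biholomorphism $\D\to\D(w_0,r)$ fixing $0\mapsto w_0$ has derivative of modulus exactly $r$ at the origin, and more generally the Schwarz lemma gives $|\psi'(0)|\le r$ even without biholomorphicity, which is all that is needed): this yields $|\psi'(0)|\le r$, i.e. $|(f\circ h)'(0)| \le r$. By the chain rule, $(f\circ h)'(0) = f_*(z)\big(h'(0)\big)$, where $h'(0)=h_*(0)(e)\in T_z(M)$ is the tangent vector of the analytic disk $h$ at $0$. Hence $r \ge |f_*(z)(h'(0))|$.

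Next I would bring in the Kobayashi–Royden metric. Set $\xi = h'(0)\in T_z(M)$. Since $h$ is itself an analytic disk with $h(0)=z$ and $h'(0)=\xi$ (take $a=1$ in the defining infimum), the definition of $k_M$ gives $k_M(z,\xi)\le 1$. If $\xi\ne 0$, combining this with the previous inequality gives
\[
  r \;\ge\; |f_*(z)(\xi)| \;\ge\; \frac{|f_*(z)(\xi)|}{k_M(z,\xi)}\,,
\]
which is bounded above by the supremum over all $\xi\in T_z(M)$. If $\xi=0$ then $f\circ h$ has vanishing derivative at $0$, contradicting that it is biholomorphic onto a disk of positive radius, so this case does not occur (and if $r=0$ the inequality is trivial). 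Taking the supremum over all schlicht disks $\D(f(z),r)$ in the range of $f$ yields $d_f(z)\le \sup_{\xi\in T_z(M)}|f_*(z)(\xi)|/k_M(z,\xi)$.

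The only subtle point is the normalization $h(0)=z$: a priori the biholomorphism $f\circ h$ need not send $0$ to the center $f(z)$ of the disk. This is handled by precomposing with the automorphism of $\D$ sending $0$ to the preimage of the center — or, more cleanly, by not insisting on hitting the center at all and instead applying the Schwarz–Pick estimate at whatever point $\zeta_0\in\D$ satisfies $h(\zeta_0)=z$, using that the Kobayashi metric and $f_*$ are evaluated at that same point; the distortion factor from the disk automorphism cancels because both sides transform the same way. I expect this bookkeeping to be the main (though minor) obstacle; everything else is the Schwarz lemma plus the definition of $k_M$.
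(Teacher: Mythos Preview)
Your chain of inequalities runs in the wrong direction, and this is a genuine gap rather than a cosmetic slip.

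You invoke the Schwarz lemma to obtain $|\psi'(0)|\le r$, hence $r\ge |f_*(z)(\xi)|$ with $\xi=h'(0)$. But the lemma asks you to bound $r$ \emph{above} by the supremum, so what is needed is the reverse inequality $r\le |\psi'(0)|$. This is exactly where the biholomorphicity is used: after precomposing $h$ with a suitable automorphism of $\D$ one may arrange that $f\circ h(\zeta)=f(z)+r\zeta$, whence $|(f\circ h)'(0)|=r$ on the nose. Your parenthetical remark that the Schwarz inequality $|\psi'(0)|\le r$ ``is all that is needed'' is therefore precisely backwards; without the biholomorphic normalisation the argument collapses. The second displayed inequality
\[
|f_*(z)(\xi)|\;\ge\;\frac{|f_*(z)(\xi)|}{k_M(z,\xi)}
\]
is also reversed: since $k_M(z,\xi)\le 1$, division by $k_M(z,\xi)$ \emph{increases} the quantity. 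Even if that step were correct, a chain of the form $r\ge X$ together with $X\le\sup$ does not yield $r\le\sup$. The paper's argument is the correct orientation:
\[
\sup_{\xi}\frac{|f_*(z)(\xi)|}{k_M(z,\xi)}\;\ge\;\frac{|f_*(z)(\xi_0)|}{k_M(z,\xi_0)}\;\ge\;|f_*(z)(\xi_0)|\;=\;|(f\circ h)'(0)|\;=\;r,
\]
with $\xi_0=h'(0)$, the middle inequality coming from $k_M(z,\xi_0)\le 1$.

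A secondary point: your reduction to $h(0)=z$ by precomposing with a disk automorphism only guarantees $f(h(0))=f(z)$, not $h(0)=z$; nothing in the definition of a schlicht disk forces $z$ to lie in the image of $h$. The paper makes the same tacit identification, so this is not a discrepancy between your argument and theirs, but it is worth being aware that the step is not as automatic as either presentation suggests.
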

\begin{proof}
Let $z$ be an arbitrary point in $M$ and
let $\D(w_{0},r)=\{w\in{\mathbb C}: 
|w-w_{0}|<r\}$ be a schlicht disk in the range of $f$ centred around 
$w_{0}=f(z)$ with radius $r$. 
This means that there is a 
holomorphic function $h$ from the unit disk in ${\mathbb C}$ to 
$M$ 
such that $g(\zeta)=f\circ h(\zeta)=w_{0}+r\zeta$. 

Choose $\xi_{0}=h'(0)\neq 0$. Then it follows from the definition of the Kobayashi-Royden pseudometric that
 $k_{M}(z,\xi_0)\leq1$, and hence it follows that 
\begin{align*}
	\sup\left\{\frac{|f_*(z)(\xi)|}{k_{M}(z,\xi)} :\xi\in T_{z}(M)\right\} &\ge \frac{|f_*(z)(\xi_{0})|}{k_{M}(z,\xi_{0})}
\\
	&\geq {|f_*(z)(\xi_{0})|}=|f_*(z)(h'(0))|=|g'(0)|=r.
\end{align*}
\end{proof}
\begin{lemma}\label{lem:bloch_konst}
	There is a universal constant $C$ such that whenever $f$ is a holomorphic function on an infinitesimally Kobayashi non-degenerate $M$ and $z \in M$ it holds that
	\[
		\sup\left\{\frac{|f_*(z)(\xi)|}{k_{M}(z,\xi)}: \xi \in T_{z}(M)\right\} \leq C \sup\{d_f(w): w \in M\}.
	\]
\end{lemma}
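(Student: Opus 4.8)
We want to bound the operator norm of $f_*(z)$ (from the Kobayashi metric to the Euclidean metric on $\mathbb{C}$) at a point $z$ by a universal constant times $\sup_w d_f(w)$. The key is to produce a schlicht disk in the range of $f$ whose radius is comparable to $|f_*(z)(\xi)|/k_M(z,\xi)$ for a well-chosen $\xi$. So this is essentially a quantitative Bloch-type theorem: given that $f$ composed with an analytic disk has a large derivative at the origin (relative to the Kobayashi length of the tangent vector), we must find a subdisk on which $f$ composed with a holomorphic map into $M$ is univalent with large image.

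**The plan.** Fix $z$ and $\xi \neq 0$. By definition of $k_M$, choose an analytic disk $\phi:\mathbb{D}\to M$ with $\phi(0)=z$ and $\phi'(0)=a\xi$ where $1/|a|$ is close to $k_M(z,\xi)$; rescaling, we may assume $\phi'(0)=\xi/k_M(z,\xi)$ up to an $\varepsilon$, so that the holomorphic function $G=f\circ\phi:\mathbb{D}\to\mathbb{C}$ satisfies $|G'(0)|\approx |f_*(z)(\xi)|/k_M(z,\xi)$. Now I would invoke the classical Bloch theorem (or rather its standard quantitative refinement, e.g. the Landau--Bloch theorem): there is a universal constant $b>0$ such that any holomorphic $G:\mathbb{D}\to\mathbb{C}$ contains in its range a schlicht disk — i.e. there is a subdomain $\Omega\subset\mathbb{D}$ on which $G$ is univalent and $G(\Omega)$ contains a disk — of radius at least $b\,|G'(0)|$. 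More precisely one uses the version with the Koebe-type normalization so the schlicht disk has radius $\gtrsim |G'(0)|$. Composing the univalent branch with $\phi$ gives a holomorphic $h:\mathbb{D}\to M$ (after reparametrizing $\Omega$ by a Riemann map to $\mathbb{D}$) realizing $\mathbb{D}(f(z'),r)$ as a schlicht disk in the range of $f$ for some $z'$, with $r \gtrsim |G'(0)| \approx |f_*(z)(\xi)|/k_M(z,\xi)$. Hence $|f_*(z)(\xi)|/k_M(z,\xi) \leq C\, d_f(z') \leq C \sup_w d_f(w)$, and taking the supremum over $\xi$ finishes the proof. The role of the infinitesimal non-degeneracy hypothesis is to guarantee $k_M(z,\xi)>0$ so the ratios are meaningful (and the supremum is over a genuine quantity).

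**Main obstacle.** The delicate point is the invocation of the quantitative Bloch theorem in exactly the right form: the classical Bloch theorem gives a schlicht disk of radius $\geq B|G'(0)|$ only when $G$ is, say, normalized on $\mathbb{D}$, and one must be careful that the center of the schlicht disk is $f(z')$ for some point $z'=\phi(\zeta)\in M$ — which is automatic since the schlicht disk is centered at a value actually attained by $G$ on the univalent subdomain. A second technical nuisance is the Riemann-map reparametrization: the univalent region $\Omega$ on which $G$ is injective and has the large image must be conformally $\mathbb{D}$ so that $h:\mathbb{D}\to M$ exists as required by the definition of schlicht disk; one should quote that the Bloch-theorem region can be taken simply connected (indeed a disk preimage), so the Riemann mapping theorem applies, and precompose. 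Finally, one must track that the constant is genuinely universal — it comes only from Bloch's constant and is independent of $M$, $f$, and $z$ — and handle the $\varepsilon$ from the infimum in the definition of $k_M$ by letting $\varepsilon\to 0$ at the end. I do not expect any of this to be hard, but the bookkeeping around "schlicht disk" versus "Bloch region" is where care is needed.
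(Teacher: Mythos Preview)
Your proposal is correct and follows essentially the same route as the paper: pick an analytic disk $\phi$ nearly realising $k_M(z,\xi)$, apply the classical Bloch theorem to $g=f\circ\phi$ to get a schlicht disk of radius $\gtrsim |g'(0)|$, and observe that any schlicht disk in the range of $g$ is automatically a schlicht disk in the range of $f$. Your worry about the Riemann-map bookkeeping is slightly overcautious: with the paper's definition, a schlicht disk for $g$ comes with a map $h:\mathbb{D}\to\mathbb{D}$ making $g\circ h$ biholomorphic onto the disk, so $\phi\circ h:\mathbb{D}\to M$ witnesses the same disk as schlicht for $f$ directly, and no separate appeal to the Riemann mapping theorem is needed.
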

\begin{proof}
	Let $z \in M$ and $\xi \in T_{z}(M)\setminus\{0\}$ be arbitrary. Since by assumption $k_M(z, \xi) > 0$, there exists for every $\epsilon >0$ an analytic disk $\phi$ in $M$ such that $\phi(0)=z$, $\phi'(0)=a_0\xi$ for some $a_0 \in \mathbb R$ satisfying
	\[
		\frac{1}{a_0} \leq (1+\epsilon)k_M(z,\xi).
	\]
	This means that
	\begin{align*}
		\frac{|f_*(z)(\xi)|}{k_{M}(z,\xi)}
		&=\frac{1}{a_0}\frac{\bigl|(f\circ\phi)'(0)\bigr|}{k_{M}(z,\xi)}\\
		&\leq(1+\epsilon)\bigl|\bigl(f\circ\phi\bigr)'(0)\bigr|.
	\end{align*}
	To continue we put $g=f \circ \phi$, and use Bloch's theorem (see for example \cite[p. 112]{Schiff} to deduce that there is a constant $B$ (Bloch's constant) such that 
	\[
		\sup \left\{ d_g(\zeta): \zeta \in \D\right\}\geq|g'(0)|B.
	\]
	Since $\sup\{d_g(\zeta): \zeta \in \D\} \leq \sup\{d_f(w):w \in M\}$, and since $\epsilon$ was chosen arbitrarily we are finished.
\end{proof}
We now define what it means to be normal for a meromorphic function 
on $M$ with values on the Riemann sphere.

\begin{definition}
A meromorphic function 
$f:M \to \hat{\mathbb C}$ is said to be normal if 
the mapping $f_*(z)$ is bounded from 
$(T_z(M),k_M)$ to $(T_{f(z)}(\hat{\mathbb C}),\chi)$, 
uniformly in $z\in M$. In other words, $f$ is normal if there exists a 
constant $C$ such that 
$$\chi\bigl(f(z),f_*(z)(\xi)\bigr)\le C k_{M}(z,\xi)$$
for all $z\in M$ and all $\xi\in\mathbb C^n$
\end{definition}

The set of all normal functions on $M$ is denoted by
${\mathcal N}(M)$. As opposed to the Bloch functions, $\mathcal{N}(M)$ is in 
general not a vector space, see (\cite{Lappan}). Nevertheless, the space $\mathcal N(M)$ is rather big. For example, any  meromorphic function $f:M\to\hat{\mathbb C}$ with the property that 
$\hat{\mathbb C}\setminus f(M)$ contains at least three points is 
normal, as was shown by J. A. Cima and S. G. Krantz, \cite{Cima}. Their proof is based on the following nice characterisation of normal functions in higher dimensions, which relates the notion of a normal function to that of a normal family of meromorphic functions. For our purposes, we need to extend this proposition to Kobayashi non-degenerate complex manifolds. Although this extension is rather straight-forward, we include the proof for the reader's convenience.
\begin{proposition}\label{prop:normal_disks}
	Suppose that $M$ is Kobayashi non-degenerate and that $f: M \to \hat{\mathbb C}$ is a meromorphic function. Then the following statements are equivalent:
	\begin{enumerate}[(i)]
		\item
			$f$ is normal;
		\item
			for every sequence $\{\phi_j\}$ of analytic disks in $M$, it holds that $\{f\circ \phi_j\}$ is a normal family.
	\end{enumerate}
\end{proposition}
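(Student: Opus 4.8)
The plan is to prove both implications directly from the definitions, using the crucial fact that analytic disks $\phi:\D\to M$ are distance-decreasing for the Kobayashi-Royden pseudometric, i.e.\ $k_M(\phi(\zeta),\phi_*(\zeta)(\eta))\le k_\D(\zeta,\eta)$, where $k_\D$ is the Poincar\'e metric on the disk. Combined with the hypothesis that $M$ is Kobayashi non-degenerate (so that a normal family of compositions $f\circ\phi_j$ cannot degenerate in an uncontrolled way), this will let us transfer the boundedness condition back and forth between $M$ and the unit disk.

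For the implication $(i)\Rightarrow(ii)$, suppose $f$ is normal with constant $C$, and let $\{\phi_j\}$ be any sequence of analytic disks. For a fixed $\zeta\in\D$ and tangent vector $\eta$, the chain rule gives $(f\circ\phi_j)_*(\zeta)(\eta)=f_*(\phi_j(\zeta))\bigl(\phi_{j*}(\zeta)(\eta)\bigr)$, so normality of $f$ together with the distance-decreasing property yields
\[
\chi\bigl((f\circ\phi_j)(\zeta),(f\circ\phi_j)_*(\zeta)(\eta)\bigr)\le C\,k_M\bigl(\phi_j(\zeta),\phi_{j*}(\zeta)(\eta)\bigr)\le C\,k_\D(\zeta,\eta).
\]
Thus the family $\{f\circ\phi_j\}$ is uniformly Lipschitz from $(\D,k_\D)$ to $(\hat{\mathbb C},\chi)$ with a common constant, and by the Arzel\`a--Ascoli theorem (or the Marty-type normality criterion for the spherical metric) this family is normal. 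One must note that uniform Lipschitz continuity with respect to the Poincar\'e metric is exactly the bounded spherical derivative condition, which gives normality in the sense of meromorphic function families.

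For the converse $(ii)\Rightarrow(i)$, I would argue by contradiction: if $f$ is not normal, then for each $j$ there exist $z_j\in M$ and $\xi_j\in T_{z_j}(M)\setminus\{0\}$ with $\chi(f(z_j),f_*(z_j)(\xi_j))> j\,k_M(z_j,\xi_j)$. Using the definition of $k_M$ as an infimum over analytic disks, pick for each $j$ an analytic disk $\psi_j$ with $\psi_j(0)=z_j$ and $\psi_j'(0)=a_j\xi_j$ where $1/|a_j|$ is within a factor $2$ of $k_M(z_j,\xi_j)$; rescaling, we may arrange $\phi_j(\zeta)=\psi_j(\zeta/2)$ or similar so that $k_\D(0,\cdot)$-normalization holds and $|(f\circ\phi_j)'(0)|=|f_*(z_j)(\psi_j'(0))|\cdot\tfrac12$ stays comparable to $\chi(f(z_j),f_*(z_j)(\xi_j))/k_M(z_j,\xi_j)$, which tends to infinity. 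Then $\{f\circ\phi_j\}$ has spherical derivatives at $0$ tending to $\infty$, so it is not equicontinuous at $0$ and hence not a normal family, contradicting $(ii)$. The main obstacle here is bookkeeping the normalization of the disks so that the spherical derivative of $f\circ\phi_j$ at the origin genuinely blows up — one has to be careful that shrinking the disk's radius (to satisfy $\phi_j'(0)=a_j\xi_j$ with $k_M$-almost-minimal $a_j$) does not simultaneously kill the derivative; this is where the non-degeneracy hypothesis $k_M(z,\xi)>0$ is used, ensuring the $a_j$ are finite and the construction is legitimate. A clean way to package the non-normality of a family with exploding spherical derivatives is Marty's theorem, which I would invoke rather than reprove.
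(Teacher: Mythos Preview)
Your argument is correct and follows essentially the same route as the paper: both directions rest on the distance-decreasing property of analytic disks for $k_M$ together with Marty's theorem, and your contrapositive version of $(ii)\Rightarrow(i)$ is just the paper's direct computation read backwards. The rescaling $\phi_j(\zeta)=\psi_j(\zeta/2)$ is unnecessary---once you have $\psi_j'(0)=a_j\xi_j$ with $1/|a_j|\le 2k_M(z_j,\xi_j)$, the spherical derivative of $f\circ\psi_j$ at $0$ is already $\ge \tfrac14\,\chi(f(z_j),f_*(z_j)(\xi_j))/k_M(z_j,\xi_j)\to\infty$ (note it is the \emph{spherical} derivative, not $|(f\circ\phi_j)'(0)|$ itself, that is comparable to $\chi/k_M$).
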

\begin{proof}
	We will follow the proof of Cima and Krantz of the corresponding result for a bounded domain in $\mathbb C^n$, see \cite[Proposition 1.4]{Cima}. For the implication $(i) \Rightarrow (ii)$, we note that it follows from the definition of the Kobayashi-Royden pseudometric that for each analytic disk $\phi$ in M,
\[
	k_M\bigl(\phi(\zeta),\phi'(\zeta)\bigr)\leq k_\mathbb{D}(\zeta,e).
\]
Furthermore, elementary calculations (see for example \cite[Example 5]{KrantzEx}) show that $k_\mathbb{D}(\zeta,e)=1/(1-|\zeta|^2)$. Putting these facts together we get that
\begin{equation}\label{eq:kobayashi_poincare}
	k_M\bigl(\phi(\zeta),\phi'(\zeta)\bigr)\leq \frac{1}{1-|\zeta|^2}.
\end{equation}
Using this and the fact that $f$ is normal we see that
	\[
		\frac{|\bigl(f\circ \phi\bigr)'(\zeta)|}{1+\bigl|f\circ\phi(\zeta)\bigr|^2}\leq C k_\mathbb{M}\bigr(f\bigl(\phi(\zeta)\bigr),\phi'(\zeta)\bigl) \leq C\frac{1}{1-|\zeta|^2}.
	\]
	Since the right-hand side of this estimate is bounded on each $G \Subset \mathbb D$, it follows from Marty's theorem, see for example  \cite[p. 75]{Schiff}, that the family $\{f\circ \phi: \mbox{ $\phi$ is an analytic disk in $M$}\}$ is a normal family.
	
	For the reverse implication, note that it follows from Marty's theorem that  there is a constant $C$ such that
	\begin{align}\label{eq:analdisk_norm}
		\frac{|\bigl(f\circ \phi\bigr)'(0)|}{1+\bigl|f\circ\phi(0)\bigr|^2}\leq C,
	\end{align}
	for each analytic disk $\phi$ in $M$. Now let $z \in M$ and $\xi \in T_z(M)\setminus\{0\}$ be arbitrary and let us proceed as in the proof of Lemma \ref{lem:bloch_konst}. Since $k_M(z,\xi) > 0$ by assumption, there exists for every $\epsilon >0$ an analytic disk $\phi$ in $M$ such that $\phi(0)=z$, $\phi'(0)=a_0\xi$ for some $a_0 \in \mathbb R$ satisfying
	\[
		\frac{1}{a_0} \leq (1+\epsilon)k_M(z,\xi).
	\]
	Combining this with \eqref{eq:analdisk_norm}, we get that
	\[
		\frac{|f_*(z)(\xi)|}{k_M(z,\xi)}=\frac{1}{a_0}\frac{\bigl|\bigl(f\circ \phi\bigr)'(0)\bigr|}{k_M(z,\xi)}\leq (1+\epsilon)\bigl|\bigl(f\circ \phi\bigr)'(0)\bigr| \leq (1+\epsilon)C\Bigl(1+\bigl|f(z)\bigr|^2\Bigr),
	\]
	and since $\epsilon >0$ was arbitrary we are finished.
	\end{proof}
	\begin{remark}
		Note that the assumption of non-degeneracy of $k_M$ only was needed to prove the implication $(ii) \Rightarrow (i)$.
	\end{remark}
We close this section with a generalisation of Lappan's Five-Point Theorem. This theorem follows from a more general theorem of Joseph and Kwack, see \cite{JK}, who studied normal mappings in a much more abstract setting, but for the reader's convenience we provide a proof for the theorem in our setting.
\begin{theorem}\label{thm:Lappan}
	Suppose that $M$ is infinitesimally Kobayashi non-degenerate and that $f: M \to \hat{\mathbb C}$ is a meromorphic function and there is a set $E$ with at least five points such that
		\[
			\sup\left\{\frac{\left|f_*(z)(\xi)\right|}{\bigl(1+ |f(z)|^2\bigr)k_M(z,\xi)}: f(z)\in E, \xi \in \mathbb C^n \right\}< \infty.
		\]
	Then $f$ is normal.
\end{theorem}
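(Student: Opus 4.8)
The plan is to reduce the statement to the classical Lappan Five-Point Theorem on the unit disk via Proposition~\ref{prop:normal_disks}. By that proposition, it suffices to show that for every sequence $\{\phi_j\}$ of analytic disks in $M$, the family $\{f\circ\phi_j\}$ is a normal family of meromorphic functions on $\mathbb D$. So I would fix an arbitrary analytic disk $\phi$ in $M$, set $g=f\circ\phi$, and try to verify that $g$ satisfies the hypothesis of the one-dimensional Five-Point Theorem uniformly over all such $\phi$ --- namely, that there is a set $E'$ of five points and a constant independent of $\phi$ such that $g^\#(\zeta)=\chi(g(\zeta),g'(\zeta))/2 \leq C$ whenever $g(\zeta)\in E'$.

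The natural choice is $E'=E$ (or five fixed points of $E$). For $\zeta\in\mathbb D$ with $g(\zeta)=f(\phi(\zeta))\in E$, write $w=\phi(\zeta)$ and $\xi=\phi'(\zeta)\in T_w(M)$, so that $g'(\zeta)=f_*(w)(\xi)$. The hypothesis of the theorem gives
\[
	\frac{|f_*(w)(\xi)|}{\bigl(1+|f(w)|^2\bigr)k_M(w,\xi)}\leq K
\]
for some finite $K$, since $f(w)\in E$. Combining this with the standard distance-decreasing estimate \eqref{eq:kobayashi_poincare}, which gives $k_M\bigl(\phi(\zeta),\phi'(\zeta)\bigr)\leq 1/(1-|\zeta|^2)$, yields
\[
	\frac{|g'(\zeta)|}{1+|g(\zeta)|^2}=\frac{|f_*(w)(\xi)|}{1+|f(w)|^2}\leq K\, k_M(w,\xi)\leq \frac{K}{1-|\zeta|^2}.
\]
Thus on every compact subset $G\Subset\mathbb D$ the spherical derivative of $g$ is bounded by a constant depending only on $G$ and $K$ (not on $\phi$) at those points where $g(\zeta)\in E$. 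This is exactly the hypothesis needed to apply Lappan's classical Five-Point Theorem to the family $\{f\circ\phi_j\}$ restricted to any $G\Subset\mathbb D$: each member, and the family uniformly, has spherically bounded derivative at the five-point set on $G$, so by Lappan's theorem (in its locally uniform form) the family is normal on $G$, and since $G$ was arbitrary, on all of $\mathbb D$. Then Proposition~\ref{prop:normal_disks} gives that $f$ is normal.

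The main obstacle is the precise citation and use of the one-dimensional result: Lappan's Five-Point Theorem is usually stated for a single function on $\mathbb D$, characterising its normality by boundedness of $(1-|\zeta|^2)f^\#(\zeta)$ on the preimage of five points; here I need the \emph{uniform} version that turns a uniform such bound over a family into normality of the family. One must either invoke the known uniform version (it follows from the standard Zalcman-lemma rescaling argument applied to the five-point condition, as in Lappan's original work) or argue directly: if $\{f\circ\phi_j\}$ were not normal on some $G\Subset\mathbb D$, a Zalcman rescaling would produce a nonconstant entire function omitting at least four of the five values of $E$ (the five-point bound forces the spherical derivative of the rescaled limit to vanish wherever it would take a value in $E$, hence those values are omitted), contradicting Picard. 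I would present the short self-contained Zalcman argument, since the paper is otherwise self-contained, and since the uniform-over-a-family statement is precisely what Proposition~\ref{prop:normal_disks} requires. Apart from this, the proof is a direct chaining of \eqref{eq:kobayashi_poincare}, the hypothesis, and Proposition~\ref{prop:normal_disks}, with no further technical difficulty.
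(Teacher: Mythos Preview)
Your proposal is correct and follows essentially the same route as the paper: reduce to normality of the family $\{f\circ\phi\}$ via Proposition~\ref{prop:normal_disks}, use the distance-decreasing estimate \eqref{eq:kobayashi_poincare} together with the hypothesis to bound the spherical derivative on $f^{-1}(E)$ uniformly on compacta, and then invoke the one-variable Lappan five-point criterion for families. The only difference is that the paper simply cites the needed uniform (family) version of Lappan's theorem from \cite{L} and \cite{Z} rather than reproving it via a Zalcman rescaling; if you do include the self-contained argument, note that the correct conclusion is that the five values of $E$ become totally ramified for the limit function (not merely that four are omitted), which contradicts Nevanlinna's bound of at most four totally ramified values.
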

\begin{proof}
	By Proposition \ref{prop:normal_disks}, it suffices to show that $\{f\circ \phi: \phi \mbox{ is an analytic disk in $M$}\}$ is a normal family. By a result of Lappan, see \cite{L} or \cite[p. 219]{Z}, it is enough to show that for each $G \Subset \mathbb D$, there is a constant $C$ (possibly depending on $G$) such that
	\begin{equation}\label{eq:lappan}
		\sup\left\{\frac{|\bigl(f\circ \phi\bigr)'(\zeta)|}{1+|f\circ \phi(\zeta)|^2}: \zeta \in G, f\circ \phi(\zeta) \in E\right\} < C,
	\end{equation}
	for each analytic disk $\phi$ in $M$. We now proceed as in the proof of Proposition \ref{prop:normal_disks} and suppose that $G$ is compactly contained in $\mathbb D$, $\zeta \in G$ and that $\phi$ is an analytic disk in $M$ such that $\phi(\zeta) \in E$. Then it holds that
	\[
		\frac{|\bigl(f\circ \phi\bigr)'(\zeta)|}{1+|f\circ \phi(\zeta)|^2}=\frac{\left|f_*\left(\phi(\zeta)\right)\left(\phi'(\zeta)\right)\right|}{1+|f\circ \phi(\zeta)|^2}\leq C k_M\bigl(\phi(\zeta),\phi'(\zeta)\bigr),
	\]
	where $C$ is some constant whose existence is guaranteed by the assumptions of the theorem. We can now deduce from \eqref{eq:kobayashi_poincare} that there is a $C$ (depending on $G$) such that \eqref{eq:lappan} hold. Hence we are finished.
\end{proof}
\section{Semi-Bloch functions}\label{sec:semi-bloch}
In this section we investigate semi-Bloch functions on  
complex manifolds. The main result is a characterisation of 
such functions in geometric as well as analytic terms on infinitesimally Kobayashi non-degenerate complex manifolds. 

It is obvious from the definition that any Bloch function is normal. Furthermore 
if $f$ is Bloch on $M$, 
then for every $\lambda\in{\mathbb C}$ the function
$g=\text{exp}(\lambda f)$ is normal since, by the chain rule,
\begin{align*}
	\chi\bigl(g(z),g_*(z)(\xi)\bigr)&=\frac{2\left|g_*(z)(\xi)\right|}{1+|g(z)|^2}=
\frac{2|\lambda|  |g(z)| \left|f_*(z)(\xi)\right|}{1+|g(z)|^2}\\
&\le
|\lambda|  \left|f_*(z)(\xi)\right|\le Ck_M(z,\xi).
\end{align*}
However, there exist non-Bloch functions $f$ that also have the property 
that for every $\lambda\in{\mathbb C}$ $g=\text{exp}(\lambda f)$ is normal.
This was proved by F. Colonna in \cite{Colonna} who coined the term 
semi-Bloch for such functions on the unit disk. 

\begin{definition}
A holomorphic function $f:M\to{\mathbb C}$ 
is said to be semi-Bloch if for every $\lambda\in{\mathbb C}$ 
the function $g_\lambda:M\to{\mathbb C}$
defined by
$g_\lambda(z)=\exp\bigl(\lambda f(z)\bigr)$ is normal. 
\end{definition}
The set of all semi-Bloch functions on $M$ is denoted by
$\mathcal{SB}(M)$. As in the case of normal functions, 
$\mathcal{SB}(M)$ is in general not a vector space. This was 
proved by R. Aulaskari and P. Lappan in \cite{AL}. In addition they 
showed that semi-Bloch functions are not closed under multiplication.

Colonna showed that semi-Bloch functions on  $\D$ are normal. Her proof uses the fact that the image of a holomorphic 
function on $\D$ is simply connected and that any holomorphic logarithm  of a 
nonvanishing holomorphic normal function is normal, see \cite[Theorem 3]{Colonna}. For semi-Bloch functions on arbitrary complex manifolds, this approach does not apply, not even in $\mathbb C$.  However, using the the following characterisation of semi-Bloch functions, we show that semi-Bloch functions on infinitesimally Kobayashi non-degenerate manifolds are normal.

The main result of this paper is the  generalisation of the main theorem of \cite{AL} to infinitesimally Kobayashi non-degenerate complex manifolds.
\begin{theorem}\label{Satsen}
Suppose that $M$ is infinitesimally Kobayashi non-degenerate and that $f$ is holomorphic on $M$. Then the following 
are equivalent:
\begin{enumerate}[(i)]
\item
$f$ is semi-Bloch in $M$;
\item
For each line $L$ in the complex plane, 
\[
{M}_{L}=\sup\left\{\frac{|f_*(z)(\xi)|}{k_{M}(z,\xi)}  : 
z\in M, f(z)\in L,\xi\in T_{z}(M)\right\}<\infty;
\]
\item
For each line $L$ in the complex plane,
\[
	{D}_{L}=\sup\{d_{f}(z): f(z)\in L\} <\infty.
\]
\end{enumerate}
\end{theorem}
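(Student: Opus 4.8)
The plan is to prove the four implications $(i)\Rightarrow(ii)$, $(ii)\Rightarrow(iii)$, $(ii)\Rightarrow(i)$ and $(iii)\Rightarrow(ii)$; the first three are short, while $(iii)\Rightarrow(ii)$ carries essentially all the weight of the theorem.

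For $(i)\Rightarrow(ii)$ I would write the line as $L=\{w\in\mathbb C:\operatorname{Re}(\bar a w)=c\}$ with $a\ne 0$. On $f^{-1}(L)$ the modulus $|g_{\bar a}(z)|=\exp(\operatorname{Re}(\bar a f(z)))=e^{c}$ is constant, so feeding the normality of $g_{\bar a}$ into the chain-rule identity $\chi(g_{\bar a}(z),(g_{\bar a})_*(z)(\xi))=\tfrac{2|a|\,|g_{\bar a}(z)|\,|f_*(z)(\xi)|}{1+|g_{\bar a}(z)|^{2}}$ from the introduction yields $|f_*(z)(\xi)|\le\tfrac{C(1+e^{2c})}{2|a|e^{c}}\,k_M(z,\xi)$ on $f^{-1}(L)$, i.e. $M_L<\infty$. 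For $(ii)\Rightarrow(iii)$ one only applies Lemma~\ref{olikhet} at each $z$ with $f(z)\in L$, which gives $d_f(z)\le M_L$ and hence $D_L\le M_L$. For $(ii)\Rightarrow(i)$ fix $\lambda\ne0$, pick five distinct nonzero points $w_1,\dots,w_5$, and note that if $g_\lambda(z)=w_k$ then $\lambda f(z)\in\operatorname{Log}w_k+2\pi i\mathbb Z$, so $f(z)$ lies on the line $L_k=\lambda^{-1}(\operatorname{Log}w_k+i\mathbb R)$; since $(g_\lambda)_*(z)(\xi)=\lambda g_\lambda(z)f_*(z)(\xi)$ one gets $\chi(g_\lambda(z),(g_\lambda)_*(z)(\xi))/k_M(z,\xi)\le\tfrac{2|\lambda||w_k|}{1+|w_k|^{2}}M_{L_k}$ whenever $g_\lambda(z)\in\{w_1,\dots,w_5\}$, which is finite by $(ii)$, so Theorem~\ref{thm:Lappan} shows $g_\lambda$ is normal; as $\lambda$ was arbitrary, $f$ is semi-Bloch.

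The implication $(iii)\Rightarrow(ii)$ is the substance of the theorem. All three conditions are invariant under affine changes of coordinate in $\mathbb C$, so we may take $L=\mathbb R$ and argue by contradiction: if $M_{\mathbb R}=\infty$ there are $z_j\in M$, $\xi_j\in T_{z_j}(M)$ with $f(z_j)\in\mathbb R$ and $|f_*(z_j)(\xi_j)|/k_M(z_j,\xi_j)\to\infty$, and as in Lemma~\ref{lem:bloch_konst} we may choose analytic disks $\psi_j$ with $\psi_j(0)=z_j$, $\psi_j'(0)=a_j\xi_j$, $a_j^{-1}\le 2k_M(z_j,\xi_j)$. The functions $g_j=f\circ\psi_j:\mathbb D\to\mathbb C$ then satisfy $g_j(0)\in\mathbb R$, $|g_j'(0)|\to\infty$, and are \emph{subordinate to $f$}: any schlicht disk in the range of $g_j$ is one in the range of $f$, so $d_{g_j}(\zeta)\le D_{L'}$ whenever $g_j(\zeta)\in L'$, uniformly in $j$. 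Translating, $h_j:=g_j-g_j(0)$ has $h_j(0)=0$, $h_j^{\#}(0)=|h_j'(0)|\to\infty$, so $\{h_j\}$ is not normal at $0$; a Zalcman rescaling $\Gamma_j(\omega)=h_j(\zeta_j+\rho_j\omega)$ (with $\zeta_j\to0$, $\rho_j\to0^+$) converges locally uniformly on $\mathbb C$ to a non-constant \emph{entire} function $\Gamma$. Since $g_j(0)\in\mathbb R$, a schlicht disk centred on $\mathbb R$ in the range of $\Gamma_j$ is, after translation by $g_j(0)$, again centred on $\mathbb R$ in the range of $f$; passing to the limit by a Hurwitz/Rouch\'e argument gives $\sup\{d_\Gamma(\omega):\Gamma(\omega)\in\mathbb R\}\le D_{\mathbb R}<\infty$. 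It therefore suffices to show that \emph{no non-constant entire function has bounded ``real schlicht radius''}, i.e. $\sup\{d_\Gamma(\omega):\Gamma(\omega)\in\mathbb R\}=\infty$ for every non-constant entire $\Gamma$ — one follows the unbounded real-analytic set $\Gamma^{-1}(\mathbb R)$ out to infinity and produces, via Bloch's theorem, arbitrarily large schlicht disks over points of $\mathbb R$ — and this contradiction finishes the proof.

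The main obstacle is exactly this last, quantitative point. Bloch's theorem readily manufactures a large schlicht disk in the range of $g_j$ (or of $\Gamma$), but gives no control over \emph{where} it sits, whereas the argument needs a large schlicht disk centred near the prescribed line; equivalently one needs the comparison of $M_L$ with $D_L$ along one and the same line, which is the several-variables counterpart of the delicate part of Aulaskari and Lappan's argument in \cite{AL}. Once that localisation is available, everything else is bookkeeping with the Kobayashi--Royden pseudometric and the elementary observation that a subdisk of a schlicht disk is again a schlicht disk (which is what lets one shrink a schlicht disk onto any line it meets).
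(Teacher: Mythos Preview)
Your arguments for $(i)\Rightarrow(ii)$, $(ii)\Rightarrow(iii)$ and $(ii)\Rightarrow(i)$ are correct and essentially coincide with the paper's (the paper normalises $L$ to a coordinate axis and, for $(ii)\Rightarrow(i)$, applies Theorem~\ref{thm:Lappan} with $E$ the whole unit circle rather than five chosen points, but this is cosmetic).

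The gap you yourself flag in $(iii)\Rightarrow(ii)$ is genuine, and the Zalcman detour does not close it. The assertion that every non-constant entire $\Gamma$ satisfies $\sup\{d_\Gamma(\omega):\Gamma(\omega)\in\mathbb R\}=\infty$ is true, but proving it runs into exactly the localisation problem you describe: Bloch's theorem gives large schlicht disks \emph{somewhere}, not over $\mathbb R$, and following $\Gamma^{-1}(\mathbb R)$ to infinity does not by itself force the Bloch disk to sit near the line. The only clean route I know to that entire-function statement is Ahlfors's island theorem --- and that is precisely the tool the paper uses, applied directly to the non-normal family $\{g_j\}$ on $\mathbb D$ with no Zalcman limit at all. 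The island theorem (Theorem~\ref{thm:Ahlfors2}) lets you \emph{prescribe} the target Jordan domains: for each $k$ choose three disjoint disks centred on $\mathbb R$ of radius at least $k$ (the paper takes $\{|w-(k+\ell)!|<k+\ell\}$, $\ell=0,1,2$); since $\{g_j\}$ is not normal, some $g_{j_k}$ covers one of them univalently from a subdomain of $\mathbb D$. Because $g_j$ differs from $f\circ\phi_j$ by the \emph{real} number $f\circ\phi_j(0)$, this yields a schlicht disk of radius $\ge k$ in the range of $f$ still centred on $\mathbb R$, hence $D_{\mathbb R}=\infty$. That is exactly the localisation you were missing, and it renders both the Hurwitz passage and the entire-function claim unnecessary.
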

\begin{remark}
	In particular, the characterisation in Theorem 4.2 holds for arbitrary bounded domains in $\mathbb{C}^n$. In this context, see also \cite{P}.
\end{remark}
We prove the theorem by splitting up the proof of the different implications into separate lemmas.
\begin{proposition}\label{summa}
Suppose that $f$ is holomorphic in $M$. Then $f$ is 
semi-Bloch if and only if $f+c$ is semi-Bloch for every constant $c$.
\end{proposition}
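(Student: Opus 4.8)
The \emph{if} direction requires nothing: taking $c=0$ among the constants for which $f+c$ is assumed semi-Bloch gives immediately that $f$ itself is semi-Bloch. So the content is the \emph{only if} direction, and the plan is to reduce it to a single elementary fact about normal functions. Fix a constant $c$ and a parameter $\lambda\in\mathbb C$. Since $\exp\bigl(\lambda(f(z)+c)\bigr)=e^{\lambda c}\exp\bigl(\lambda f(z)\bigr)$, the function attached to $f+c$ at parameter $\lambda$ is just $\mu$ times the function $g_\lambda=\exp(\lambda f)$ attached to $f$, where $\mu=e^{\lambda c}\neq 0$. As $f$ is semi-Bloch, $g_\lambda$ is normal, so everything comes down to showing that a nonzero constant multiple of a normal meromorphic function is again normal.

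I would prove this last assertion by regarding multiplication by $\mu$ as post-composition with the Möbius transformation $T_\mu:\hat{\mathbb C}\to\hat{\mathbb C}$, $T_\mu(w)=\mu w$. This is a biholomorphic self-map of the compact manifold $\hat{\mathbb C}$, so its differential is bounded with respect to the spherical metric: there is a finite constant $A_\mu$ with $\chi\bigl(T_\mu(w),(T_\mu)_*(w)(v)\bigr)\le A_\mu\,\chi(w,v)$ for all $w\in\hat{\mathbb C}$ and all $v\in T_w(\hat{\mathbb C})$. (Concretely one finds $A_\mu=\max\{|\mu|,|\mu|^{-1}\}$ from the elementary inequality $|\mu|(1+t)\le\max\{|\mu|,|\mu|^{-1}\}\,(1+|\mu|^2 t)$ for $t\ge 0$, but only finiteness is needed.) If $h$ is normal with constant $C$, then applying the chain rule to $\mu h=T_\mu\circ h$ gives, for all $z\in M$ and $\xi\in T_z(M)$,
\[
\chi\bigl((\mu h)(z),(\mu h)_*(z)(\xi)\bigr)\le A_\mu\,\chi\bigl(h(z),h_*(z)(\xi)\bigr)\le A_\mu C\,k_M(z,\xi),
\]
so $\mu h$ is normal.

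Putting the pieces together: for every $\lambda\in\mathbb C$ the function $\exp\bigl(\lambda(f+c)\bigr)=e^{\lambda c}g_\lambda$ is a nonzero constant times a normal function, hence normal; since $\lambda$ was arbitrary, $f+c$ is semi-Bloch. I do not expect a genuine obstacle here — the whole point is the observation that normality is preserved under the (spherically bi-Lipschitz) self-maps $w\mapsto\mu w$ of $\hat{\mathbb C}$ — and note that, unlike the main theorem, this proposition does not use the hypothesis that $M$ be infinitesimally Kobayashi non-degenerate.
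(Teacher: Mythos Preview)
Your proof is correct and follows the same approach as the paper: both reduce the \emph{only if} direction to the identity $\exp\bigl(\lambda(f+c)\bigr)=e^{\lambda c}\exp(\lambda f)$ together with the lemma that a nonzero constant multiple of a normal function is normal, and both observe that the \emph{if} direction is trivial. The only difference is expository: the paper justifies the constant-multiple lemma by a one-line direct computation with the spherical metric, while you frame it more conceptually as post-composition with a spherically bi-Lipschitz M\"obius map and supply the explicit constant $A_\mu=\max\{|\mu|,|\mu|^{-1}\}$; your observation that the argument needs no Kobayashi non-degeneracy hypothesis is also correct.
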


\begin{proof}
If $g(z)$ is normal, then $h(z)=cg(z)$ is normal for every constant 
$c$.
This follows immediately from the definition:
\[
	\chi\bigl(h(z),h_*(z)(\xi)\bigr)=
\chi\bigl(h(z),(cg)_*(z)(\xi)\bigr)
=2|c|\frac{|g_*(z)(\xi)|}{1+|cg(z)|^2}\le C\cdot 
k_{M}(z,\xi),
\]
since $g$ is normal.
It follows that if $f$ is semi-Bloch, then 
$\text{exp}({\lambda(f(z)+c)})=e^{\lambda c}\text{exp}({\lambda
f(z)})$ is 
normal for every $\lambda\in{\mathbb C}$ and hence $f(z)+c$ is 
semi-Bloch.

The other implication is obvious.
\end{proof}
\begin{lemma}\label{lem:SB_to_ML}
	Suppose that $f$ is semi-Bloch in $M$. Then
	\[
		{M}_{L}=\sup\left\{\frac{|f_*(z)(\xi)|}{k_{M}(z,\xi)}  : 
z\in M, f(z)\in L,\xi\in T_{z}(M)\right\}<\infty,
	\]
	for each line $L$ in the complex plane. 
\end{lemma}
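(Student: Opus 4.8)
The plan is to reduce the claim to the normality of a single well-chosen exponential function $g_\lambda = \exp(\lambda f)$ and then unwind the definition of normality into the inequality defining $M_L$. The key observation is that for a line $L = \{a + tb : t \in \mathbb{R}\}$ in $\mathbb{C}$ (with $b \neq 0$), the condition $f(z) \in L$ means that $\lambda f(z)$ lies on a line through $\lambda a$ with direction $\lambda b$; by choosing $\lambda = i/\bar b$ (or more simply by first translating via Proposition \ref{summa} so that $L$ passes through the origin, and then rotating), we can arrange that $f(z) \in L$ forces $\operatorname{Re}(\lambda f(z))$ to be constant, so that $|g_\lambda(z)| = |\exp(\lambda f(z))|$ is a fixed constant $\rho > 0$ whenever $f(z) \in L$.

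First I would use Proposition \ref{summa} to reduce to the case where $L$ passes through the origin: if $L = L_0 + c$ with $L_0$ a line through $0$, then $f$ semi-Bloch implies $f - c$ semi-Bloch, and $M_L$ for $f$ equals $M_{L_0}$ for $f - c$ (the Kobayashi metric and the differential are translation-invariant in the target). So assume $0 \in L$, say $L = \{t b : t \in \mathbb{R}\}$ with $|b| = 1$. Next I would pick $\lambda = i/b$, so that for $z$ with $f(z) = tb \in L$ we have $\lambda f(z) = it \in i\mathbb{R}$, hence $|g_\lambda(z)| = |e^{it}| = 1$. Since $f$ is semi-Bloch, $g_\lambda$ is normal, so there is a constant $C$ with
\[
	\chi\bigl(g_\lambda(z), (g_\lambda)_*(z)(\xi)\bigr) \le C\, k_M(z,\xi)
\]
for all $z \in M$, $\xi \in T_z(M)$.

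Then I would compute the left-hand side at points $z$ with $f(z) \in L$. By the chain rule $(g_\lambda)_*(z)(\xi) = \lambda g_\lambda(z) f_*(z)(\xi)$, so
\[
	\chi\bigl(g_\lambda(z), (g_\lambda)_*(z)(\xi)\bigr) = \frac{2|\lambda|\,|g_\lambda(z)|\,|f_*(z)(\xi)|}{1 + |g_\lambda(z)|^2} = \frac{2|\lambda| \cdot 1 \cdot |f_*(z)(\xi)|}{1 + 1} = |\lambda|\,|f_*(z)(\xi)|,
\]
using $|g_\lambda(z)| = 1$ and $|\lambda| = 1$ (since $|b|=1$). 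Combining the last two displays gives $|f_*(z)(\xi)| \le C\, k_M(z,\xi)$ whenever $f(z) \in L$, which is exactly $M_L \le C < \infty$. After undoing the translation this yields the bound for the original line $L$.

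I do not expect a serious obstacle here; the only mild subtlety is bookkeeping the translation/rotation and confirming that the reduction via Proposition \ref{summa} genuinely preserves the supremum $M_L$ — which it does, because translating the target by $c$ changes neither $f_*$ nor $k_M$, only relabels which line the image point lies on. One should also note the degenerate possibility $\xi = 0$, for which the inequality is trivial, so the supremum is effectively over $\xi \neq 0$ where $k_M(z,\xi) > 0$ by the non-degeneracy hypothesis (though non-degeneracy is not actually needed for this direction — it will matter for the converse implications).
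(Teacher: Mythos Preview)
Your proof is correct and follows essentially the same approach as the paper: reduce (via Proposition~\ref{summa}) so that the relevant line is mapped to the imaginary axis by an affine map, then exploit that $|g_\lambda(z)|=1$ on the preimage of $L$ to convert the spherical-metric bound from normality of $g_\lambda$ into the Euclidean bound $|f_*(z)(\xi)|\le C\,k_M(z,\xi)$. The only cosmetic difference is that the paper absorbs the rotation into the reduction (assuming $L$ is the imaginary axis and taking $\lambda=1$), whereas you translate to a line through the origin and keep an explicit $\lambda=i/b$; your remark that non-degeneracy is not needed for this implication is also in line with the paper.
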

\begin{proof}
If necessary we can choose complex numbers 
$\lambda$ and $\alpha$ such that $\{w\in{\mathbb C} : w=\lambda 
z+\alpha, z\in L\}$ is 
the imaginary axis. Since $f$ is supposed to be semi-Bloch, 
it follows from Proposition \ref{summa}  
that $\lambda f(z)+\alpha$ is semi-Bloch. We may therefore assume 
that $L$ is the imaginary axis.
Define the function $g(z)=\text{exp}({f(z)})$. Then 
$g$ is normal in $M$
since $f$ is semi-Bloch.
This means that
\[
	\frac{\chi\bigl(g(z),g_*(z)(\xi)\bigr)}{k_{M}(z,\xi)}<\infty,
\]
and since $|g(z)|=1$ for all $z\in M$ such that $f(z)\in L$ 
it therefore follows that for such $z$
\begin{align*}
	\frac{\chi\bigl(g(z),g_*(z)(\xi)\bigr)}{k_{M}(z,\xi)}&= \frac{2|g_*(z)(\xi)|}{\left(1+|g(z)|^2\right)
k_{M}(z,\xi)} = \frac{|g_*(z)(\xi)|}{k_{M}(z,\xi)}\\
	&=\left|\text{exp}({f(z)})\right|\frac{|f_*(z)(\xi)|}{k_{M}(z,\xi)}=
\frac{|f_*(z)(\xi)|}{k_{M}(z,\xi)},
\end{align*}
and therefore $M_L < \infty$.
\end{proof}
\begin{lemma}\label{lem:ML_to_SB}
	Suppose that $f$ is a holomorphic function on an infinitesimally Kobayashi non-degenerate M such that for each line $L \subset \mathbb C$,
	\[
		{M}_{L}=\sup\left\{\frac{|f_*(z)(\xi)|}{k_{M}(z,\xi)}  : 
z\in M, f(z)\in L,\xi\in T_{z}(M)\right\}<\infty.
	\]
	Then $f$ is semi-Bloch.
\end{lemma}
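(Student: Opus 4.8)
The plan is to derive the lemma directly from the generalised Five-Point Theorem, Theorem~\ref{thm:Lappan}. Fix $\lambda\in\mathbb C$; we must show that $g_\lambda=\exp(\lambda f)$ is normal. For $\lambda=0$ this is trivial, since $g_0\equiv 1$, so assume $\lambda\neq 0$. The essential device is the choice of the five-point set: let $E=\{e^{2\pi i j/5}:j=0,1,2,3,4\}$ be the fifth roots of unity, all of which lie on the unit circle. If $g_\lambda(z)\in E$, then $|\exp(\lambda f(z))|=1$, that is $\operatorname{Re}\bigl(\lambda f(z)\bigr)=0$; writing $\lambda=a+bi$ and $f(z)=u+iv$ this reads $au-bv=0$, which, since $\lambda\neq 0$, is the equation of a line $L=L_\lambda$ in $\mathbb C$. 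Hence $g_\lambda(z)\in E$ forces $f(z)\in L_\lambda$.

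Next I would run the chain-rule computation already used for $\mathcal{SB}$ in Section~\ref{sec:semi-bloch}: since $(g_\lambda)_*(z)(\xi)=\lambda\,g_\lambda(z)\,f_*(z)(\xi)$, for every $z$ with $g_\lambda(z)\in E$ we have $|g_\lambda(z)|=1$, and therefore
\[
\frac{|(g_\lambda)_*(z)(\xi)|}{\bigl(1+|g_\lambda(z)|^2\bigr)\,k_M(z,\xi)}
=\frac{|\lambda|}{2}\,\frac{|f_*(z)(\xi)|}{k_M(z,\xi)}
\le\frac{|\lambda|}{2}\,M_{L_\lambda}<\infty
\]
by hypothesis. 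Thus the supremum appearing in Theorem~\ref{thm:Lappan}, taken over all $z$ with $g_\lambda(z)\in E$ and all $\xi\in\mathbb C^n$, is finite for the five-point set $E$; since $M$ is infinitesimally Kobayashi non-degenerate, that theorem applies and yields that $g_\lambda$ is normal. As $\lambda\in\mathbb C$ was arbitrary, $f$ is semi-Bloch.

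The argument is short once one spots the trick of pulling a circle of values of $g_\lambda$ back to a line of values of $f$; all the analytic weight is carried by Theorem~\ref{thm:Lappan} (ultimately Lappan's five-point criterion together with Proposition~\ref{prop:normal_disks}), so there is no substantial obstacle. The only mild points to check are that for $\lambda\neq 0$ the set $\{w\in\mathbb C:\operatorname{Re}(\lambda w)=0\}$ is genuinely a line (so that $M_{L_\lambda}$ is indeed one of the quantities assumed finite), and that $g_\lambda$ omits the values $0$ and $\infty$, so the five chosen points lie exactly where the estimate in Theorem~\ref{thm:Lappan} is needed.
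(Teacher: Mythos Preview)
Your proof is correct and follows essentially the same route as the paper: reduce to $\lambda\neq 0$, take the line $L_\lambda=\{w:\operatorname{Re}(\lambda w)=0\}$, use the chain rule and $|g_\lambda(z)|=1$ on $f^{-1}(L_\lambda)$ to bound the spherical derivative, and invoke Theorem~\ref{thm:Lappan}. The only cosmetic difference is that you spell out the five-point set $E$ explicitly as the fifth roots of unity, whereas the paper leaves the choice of $E\subset\{|w|=1\}$ implicit.
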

\begin{proof}
	We want to show that $g(z)=\exp\bigl(\lambda f(z)\bigr)$ is normal. Since this is trivially true for $\lambda = 0$,
  we can assume that $\lambda \neq 0$.
  Now let $L=\{\zeta \in \mathbb C: \operatorname {Re}(\lambda\zeta) = 0\}$. Since $|g(z)|=1$ when $f(z)\in L$, we then have that for $z \in M$ such that $f(z) \in L$:
  \[
    \frac{|g_*(z)(\xi))|}{k_{M}(z,\xi)\bigl(1+|g(z)|^2\bigr)}=\frac{|\lambda| |f_*(z)(\xi)|}{2k_{M}(z,\xi)}<\frac 1 2| \lambda| M_L < \infty.
  \]
  It now follows from Theorem \ref{thm:Lappan} that $g$ is normal, and a hence that $f$ is semi-Bloch.
\end{proof}
For the next lemma we need the following version of Ahlfors's Five-Island-Theorem. For an accessible proof of this deep theorem, see for example \cite{Bergweiler}.
\begin{theorem}[Ahlfors]\label{thm:Ahlfors2}
	 Suppose that $G$ is a domain in $\mathbb C$ and that $R_1, R_2$ and $R_3$ are mutually disjoint Jordan domains in ${\mathbb C}$. Then the family
	 \begin{multline*}
	 	\mathcal F=\{f: f \mbox{ is holomorphic in $G$, and $f$ maps no subdomain}\\
		\mbox{ of $G$ conformally onto $R_j$ for $j\in \{1,2,3\}$}. \},
	 \end{multline*}
	 is a normal family.
\end{theorem}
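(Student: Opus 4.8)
The plan is to reduce this normal-families assertion to the classical one-variable theorem of Ahlfors on the whole plane --- every non-constant entire function has a conformal island over at least one of any three bounded Jordan domains with pairwise disjoint closures --- which I would quote rather than reprove. So suppose, towards a contradiction, that $\mathcal F$ fails to be normal at some $\zeta_0 \in G$. By Marty's theorem the spherical derivatives of the members of $\mathcal F$ are then unbounded on every neighbourhood of $\zeta_0$, so Zalcman's rescaling lemma (in the form of Lohwater--Pommerenke) yields functions $f_k \in \mathcal F$, points $\zeta_k \to \zeta_0$ and radii $\rho_k \to 0^+$ for which $g_k(w) := f_k(\zeta_k + \rho_k w)$ converges locally uniformly on $\mathbb C$ to a non-constant limit $g$; since each $g_k$ is holomorphic, $g$ is in fact a non-constant entire function (its spherical derivative at the origin equals $1$).

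Now apply the plane version of Ahlfors's theorem to $g$, but with slightly enlarged Jordan domains $\hat R_j \supset \overline{R_j}$ chosen still to have pairwise disjoint closures: there is an index $j_0$ and a subdomain $U_0 \subset \mathbb C$ with $g|_{U_0} \colon U_0 \to \hat R_{j_0}$ biholomorphic. Fix an auxiliary Jordan domain $Q$ with $\overline{R_{j_0}} \subset Q \Subset \hat R_{j_0}$ and put $W_0 = (g|_{U_0})^{-1}(Q)$, a domain whose closure is compact in $U_0$ and which satisfies $g(\partial W_0) = \partial Q$, so $g(\partial W_0)$ keeps a fixed positive distance from $R_{j_0}$. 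Because $g_k \to g$ uniformly on $\overline{W_0}$, the argument principle shows that for all large $k$ the map $g_k$ assumes every value of $R_{j_0}$ exactly once in $W_0$; consequently $g_k$ --- and therefore, after undoing the affine substitution $w \mapsto \zeta_k + \rho_k w$, the function $f_k$ itself --- maps a subdomain of $G$ conformally onto $R_{j_0}$. This contradicts $f_k \in \mathcal F$, and normality of $\mathcal F$ follows. (The sole purpose of passing to the larger $\hat R_j$ is to leave room for the boundary of the island to deform under the convergence $g_k \to g$; without it one would only recover an island over a slightly shrunken copy of $R_{j_0}$.)

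The one genuinely hard input, which I propose to import rather than establish, is the cited plane statement for transcendental entire functions --- equivalently the transcendental case of Ahlfors's five islands theorem. Its proof is the technical heart of Ahlfors's theory of covering surfaces: the second main theorem bounding the number of simple versus multiple islands over the $\hat R_j$ in terms of the mean sheet number and the length of the relative boundary of the covering, together with a length--area estimate forcing the sheet number to dominate that length on the small rescaled disks near $\zeta_0$. The polynomial case of the plane statement is by contrast elementary (a non-constant polynomial is a proper self-map of $\mathbb C$ and a short Riemann--Hurwitz count produces the island), and every other step above is soft; for the deep part I would simply cite Ahlfors, or the accessible modern treatment in \cite{Bergweiler}.
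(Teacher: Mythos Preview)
The paper does not prove this theorem at all: it is quoted as a classical result of Ahlfors and the reader is referred to \cite{Bergweiler} for an accessible proof. So there is no ``paper's own proof'' to compare against.

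Your outline is correct and is, in fact, exactly the strategy of the reference the paper cites: Bergweiler's proof proceeds by assuming non-normality, applying the Zalcman--Lohwater--Pommerenke rescaling to produce a non-constant entire limit, invoking the entire-function island statement for that limit, and then pushing the island back to the approximating functions by an argument-principle perturbation (your enlargement $R_{j_0}\Subset Q\Subset \hat R_{j_0}$ is the right device for this last step). The one point worth flagging is that the ``plane version'' you propose to import---every non-constant entire function has a simple island over one of three Jordan domains with disjoint closures---is not genuinely easier than the normal-families statement you are proving; the two are equivalent via exactly the Zalcman reduction you carry out. So your write-up is an honest roadmap that isolates the deep input rather than an independent proof, which is entirely appropriate here since the paper itself does no more than cite the result.
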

\begin{lemma}\label{lem:DL_to_ML}
Suppose that $M$ is infinitesimally Kobayashi non-degenerate and that $f$ is a holomorphic function on $M$. If there is a line $L$ in $\mathbb C$ such that
\[
{M}_{L}=\sup\left\{\frac{|f_*(z)(\xi)|}{k_{M}(z,\xi)}  : 
z\in M, f(z)\in L,\xi\in T_{z}(M)\right\}=\infty,
\]
then
\[
	{D}_{L}=\sup\{d_{f}(z): f(z)\in L\} =\infty.
\]
\end{lemma}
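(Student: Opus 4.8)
The plan is to prove the contrapositive in the form stated: assuming $M_L = \infty$ for some line $L$, I produce schlicht disks in the range of $f$ centred at points of $L$ with radii tending to infinity, so that $D_L = \infty$. Up to replacing $f$ by $\lambda f + \alpha$ (which, by Proposition \ref{summa} and a direct check, changes $M_L$ and $D_L$ only by bounded factors and moves $L$ to the imaginary axis), I may assume $L = i\mathbb R$. Since $M_L = \infty$, there are points $z_j \in M$ with $f(z_j) \in i\mathbb R$ and tangent vectors $\xi_j \in T_{z_j}(M)\setminus\{0\}$ with $|f_*(z_j)(\xi_j)| / k_M(z_j,\xi_j) \to \infty$. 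Normalising $\xi_j$ via the infinitesimal Kobayashi metric exactly as in the proof of Lemma \ref{lem:bloch_konst}, I obtain analytic disks $\phi_j : \mathbb D \to M$ with $\phi_j(0) = z_j$ and $|(f\circ\phi_j)'(0)| \to \infty$. Write $g_j = f \circ \phi_j$; these are holomorphic on $\mathbb D$, with $g_j(0) \in i\mathbb R$ and $|g_j'(0)| \to \infty$.

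The heart of the argument is to extract, from the functions $g_j$ with exploding derivative at $0$, large schlicht disks in their ranges centred at points of $i\mathbb R$. The naive tool is Bloch's theorem, which gives a schlicht disk of radius $\gtrsim |g_j'(0)|$ somewhere in the range of $g_j$ — but that disk need not be centred on $i\mathbb R$, and a schlicht disk for $g_j$ pulls back to a schlicht disk for $f$ (composing the univalent branch $h_j : \mathbb D \to \mathbb D$ with $\phi_j$), but again centred at an arbitrary point of $f(M)$, not necessarily on $L$. This is why Ahlfors's Five-Island Theorem (Theorem \ref{thm:Ahlfors2}) is needed rather than plain Bloch. I would argue by contradiction: suppose $D_L = R < \infty$. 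Fix three mutually disjoint Jordan domains $R_1, R_2, R_3$, each contained in a disk of radius $R$ centred at a point of $i\mathbb R$ — concretely, small round disks centred at $i, 2i, 3i$ of radius less than $\min(R, 1/2)$ and pairwise disjoint. If some $g_j$ mapped a subdomain of $\mathbb D$ conformally onto one of the $R_k$, then the corresponding schlicht disk (a round disk inside $R_k$, hence of radius $< R$ and centred near $R_k$'s centre, which lies on $i\mathbb R$) would lie in the range of $f$ via $\phi_j$; that is consistent with $D_L = R$, so this by itself is not yet a contradiction. Instead the contradiction comes from the \emph{other} side: if $D_L = R$ is finite, then for all sufficiently large $j$ the function $g_j$ maps \emph{no} subdomain of $\mathbb D$ conformally onto any $R_k$ with centre on $L$ — wait, this needs care, since a conformal image of $R_k$ need not be a round disk. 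Let me restate the key step: by the hypothesis $D_L < \infty$, I choose $R_1,R_2,R_3$ to be round disks centred on $i\mathbb R$ of radius $> D_L$; then no $g_j$ can map a subdomain of $\mathbb D$ conformally onto such an $R_k$, because that would be a schlicht disk of radius $> D_L$ in the range of $f$ centred at a point of $L$. Hence every $g_j$ belongs to the Ahlfors family $\mathcal F$ for $(R_1,R_2,R_3)$, so $\{g_j\}$ is normal on $\mathbb D$; but a normal family is locally uniformly bounded in the spherical derivative, so $\{g_j'(0)\}$ is bounded (the $g_j$ are finite-valued and $g_j(0) \in i\mathbb R$ stays in a compact subset of $\mathbb C$, so the spherical and Euclidean derivatives are comparable at $0$), contradicting $|g_j'(0)| \to \infty$.

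I expect the main obstacle to be the bookkeeping in the previous paragraph: making sure the Jordan domains $R_k$ are chosen so that ``$g_j$ maps a subdomain conformally onto $R_k$'' really does produce a schlicht disk \emph{of controlled radius centred on $L$} in the range of $f$. The clean way is: take $R_k = \mathbb D(c_k, r)$ with $c_k \in i\mathbb R$, $r > D_L$, the $c_k$ chosen so the three disks are pairwise disjoint (e.g. $c_k = 2kri$ works if $r$ is fixed first). If $g_j = f\circ\phi_j$ maps a subdomain $U \subset \mathbb D$ conformally onto $R_k$, let $h_j = (g_j|_U)^{-1}$ composed with a biholomorphism $\mathbb D \to U$; then $\phi_j \circ h_j : \mathbb D \to M$ realises $\mathbb D(c_k, r)$ as a schlicht disk in the range of $f$, and $c_k \in L$, so $d_f(z) \ge r > D_L$ for the point $z$ with $f(z) = c_k$ — contradiction. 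Therefore no such $U$ exists for any $j$ and any $k$, every $g_j \in \mathcal F$, and the normality argument closes as above. One small point to verify en route: the passage from $|f_*(z_j)(\xi_j)|/k_M(z_j,\xi_j)\to\infty$ to $|g_j'(0)|\to\infty$ uses infinitesimal non-degeneracy of $k_M$ precisely as in Lemma \ref{lem:bloch_konst}, so that step is already available. I would also remark that finiteness of $g_j(0) = f(z_j)$ on the compact set $L \cap \{|w| \le C\}$ is automatic only if the $f(z_j)$ stay bounded; if they do not, one first translates along $L$ — but since translation along the imaginary axis is $f \mapsto f - it_j$, which by Proposition \ref{summa} preserves semi-Bloch-ness and shifts $L$ to itself, one may assume after relabelling that $f(z_j) = 0$ for all $j$, which also simplifies the comparison of spherical and Euclidean derivatives at $0$.
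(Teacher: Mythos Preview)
Your argument is correct and follows essentially the same route as the paper: reduce $L$ to a coordinate axis, use Kobayashi non-degeneracy to produce analytic disks $\phi_j$ with $|(f\circ\phi_j)'(0)|\to\infty$, translate so that $g_j(0)=0$ (the paper does this directly by setting $g_j=f\circ\phi_j-f\circ\phi_j(0)$, which is cleaner than invoking Proposition~\ref{summa}), and then combine Marty's theorem with the Ahlfors three-island theorem applied to disks centred on $L$. The only organisational difference is that you argue by contradiction with one fixed triple of large disks, whereas the paper argues directly by choosing, for each $k$, three disks of radius at least $k$ centred on $L$ and extracting a conformal island; the two arguments are logically equivalent.
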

\begin{proof}
	By Lemma \ref{summa}, we may---as in the proof of Lemma \ref{lem:SB_to_ML}---assume that $L=\mathbb R$. Since $M_L=\infty$, there are points $z_j \in M \subset \mathbb C^n$ and $\xi_j \in T_{z_j}(M)$ such that $f(z_j) \in \mathbb R$ and
	\[
		\lim_{j\to \infty} \frac{|f_*(z_j)(\xi_j)|}{k_{M}(z_j,\xi_j)}=\infty.
	\]
	Since $k_M(z_j,\xi_j)>0$ by assumption, we may proceed as in the proof of Lemma \ref{lem:bloch_konst}, to conclude that there are analytic disks $\phi_j$ in $M$ such that $\phi_j(0)=z_j$ and
	\[
		\lim_{j \to \infty} \bigl(f\circ \phi_j\bigr)'(0)=\infty.
	\]
	By setting $g_j(\zeta)=f \circ \phi_j(\zeta)-f\circ \phi_j(0)$ we get that
	\[
		\lim _{j\to \infty} \frac{g_j'(0)}{1+|g_j(0)|^2}=\lim_{j\to \infty} \bigl(f\circ \phi_j\bigr)'(0) = \infty.
	\]
	By Marty's theorem, see \cite[p. 75]{Schiff}, this means that $g_j$ is not a normal family, and by Theorem \ref{thm:Ahlfors2}, this means that for each fixed $k$, there is a $g_{j_k}$ that maps a subdomain of $\mathbb D$ conformally onto one of the domains $\{w \in \mathbb C: |w-(k+\ell)!|<k+\ell\}$ where $\ell = 0,1,2$.  Since the range of $g_{j_k}$ is just the range of $f\circ \phi_{j_k}$ translated by the real number $f\circ \phi_{j_k}(0)$, this means that $d_f(z_k)\geq d_{f\circ \phi_{j_k}}(0)\geq k$. Since $k$ was arbitrary, we are finished.
\end{proof}
We have now implicitly proved the main theorem. To make this clear we state this as a proof.
\begin{proof}[Proof of Theorem \ref{Satsen}]
	The implications $(i) \Rightarrow (ii) \Rightarrow (iii)$ are in turn proved by Lemma \ref{lem:SB_to_ML} and Lemma \ref{olikhet}. The reverse implications are proved by Lemma \ref{lem:DL_to_ML} and Lemma \ref{lem:ML_to_SB}.
\end{proof}
We end this section with the following proposition clarifying the interrelatedness of the function classes treated in this paper.
\begin{proposition}
For an infinitesimally Kobayashi non-degenerate complex manifold $M$, it holds that
\[
	{\mathcal B}(M)\subset{\mathcal S}{\mathcal 
B}(M)\subset{\mathcal N}(M),
\]
and there are examples where these inclusions are proper.
\end{proposition}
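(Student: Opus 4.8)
The plan is to derive the two inclusions directly from results already established, and then to exhibit both strictness examples inside the unit disk $\D$, which, being bounded, is hyperbolic and hence infinitesimally Kobayashi non-degenerate. The inclusion $\mathcal B(M)\subset\mathcal{SB}(M)$ is essentially free: the chain-rule computation recorded just before the definition of semi-Bloch functions shows that if $f$ is Bloch with constant $C$, then $\chi\bigl(g_\lambda(z),(g_\lambda)_*(z)(\xi)\bigr)\le|\lambda|\,|f_*(z)(\xi)|\le|\lambda|\,C\,k_M(z,\xi)$ for every $\lambda\in\mathbb C$, so each $g_\lambda=\exp(\lambda f)$ is normal and $f\in\mathcal{SB}(M)$.

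For $\mathcal{SB}(M)\subset\mathcal N(M)$, let $f$ be semi-Bloch. By Theorem \ref{Satsen} (implication $(i)\Rightarrow(ii)$), $M_L<\infty$ for every line $L$, which I apply with $L=\mathbb R$. Taking the five-point set $E=\{0,1,2,3,4\}\subset\mathbb R$, for every $z$ with $f(z)\in E$ we have $1+|f(z)|^2\ge1$, so
\[
\frac{|f_*(z)(\xi)|}{\bigl(1+|f(z)|^2\bigr)k_M(z,\xi)}\le\frac{|f_*(z)(\xi)|}{k_M(z,\xi)}\le M_{\mathbb R}<\infty .
\]
Hence Theorem \ref{thm:Lappan}, applied to $f$ regarded as a meromorphic map $M\to\hat{\mathbb C}$, shows that $f$ is normal. (This could even be shortened: the argument of Lemma \ref{lem:SB_to_ML} already yields $M_{i\mathbb R}<\infty$ from normality of $\exp(f)$ alone, which with Theorem \ref{thm:Lappan} suffices.)

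It remains to show the inclusions can be strict, and for this I take $M=\D$. For $\mathcal B(\D)\subsetneq\mathcal{SB}(\D)$, I invoke Colonna's construction \cite{Colonna}, recalled just before the definition of semi-Bloch functions, of a holomorphic function on $\D$ that is not Bloch but for which every $\exp(\lambda f)$ is normal; such a function lies in $\mathcal{SB}(\D)\setminus\mathcal B(\D)$. For $\mathcal{SB}(\D)\subsetneq\mathcal N(\D)$, I take the Cayley transform $f(z)=(1+z)/(1-z)$, which maps $\D$ biholomorphically onto the right half-plane; then $\hat{\mathbb C}\setminus f(\D)$ contains more than three points, so $f$ is normal by the Cima--Krantz theorem recalled in Section \ref{Blochavsnitt}. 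On the other hand, $f(z)$ is real precisely when $z\in(-1,1)$, and there, using $k_{\D}(x,e)=1/(1-x^2)$,
\[
\frac{|f_*(x)(e)|}{k_{\D}(x,e)}=(1-x^2)|f'(x)|=\frac{2(1+x)}{1-x}\longrightarrow\infty\quad\text{as }x\to1^-,
\]
so $M_{\mathbb R}=\infty$ and therefore $f\notin\mathcal{SB}(\D)$ by Theorem \ref{Satsen}.

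I do not expect a real obstacle: every step is a direct application of a theorem already proved, together with the explicit formula for $k_{\D}$. The only points requiring some care are the bookkeeping in the second inclusion---the five-point set must be placed on a line along which $M_L$ is finite---and the appeal to \cite{Colonna} for the first strictness claim; if a self-contained example is preferred, one can instead construct a non-injective semi-Bloch function on $\D$ that fails to be Bloch directly from the geometric characterisation $(iii)$ of Theorem \ref{Satsen}.
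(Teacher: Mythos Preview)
Your proof is correct and follows essentially the same route as the paper: the inclusions come from the chain-rule computation and Lemma~\ref{lem:SB_to_ML} combined with Theorem~\ref{thm:Lappan}, and Colonna's example handles the first strictness. For the second strictness the paper instead takes a conformal map from $\D$ onto the slit plane $\mathbb C\setminus[0,\infty)$ and invokes condition~(iii) of Theorem~\ref{Satsen} (arbitrarily large schlicht disks centred on a line) rather than your direct computation of $M_{\mathbb R}$ for the Cayley transform, but the two examples are interchangeable.
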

\begin{proof}
We begin with showing the inclusions. We already showed in the beginning of Section \ref{sec:semi-bloch} that Bloch functions are semi-Bloch. To see that a function $f\in \mathcal{SB}(M)$ is normal, it is enough to note that by Lemma \ref{lem:SB_to_ML}, the assumptions of Theorem \ref{thm:Lappan} holds with $E$ any line in $\mathbb C$.

F. Colonna gave an example of a function that is semi-Bloch but not Bloch, see \cite{Colonna}. It follows from Theorem \ref{thm:Lappan} that a conformal mapping from $\D$ to $\mathbb C \setminus \{x+0i \in \mathbb C: x >0\}$ is normal, but by $(iii)$ of Theorem \ref{Satsen}, such a function cannot be semi-Bloch. Of course, these examples can trivially be extended to product domains in $\mathbb C^n$.
\end{proof}
%%%%%%%%%%%%%%%%%%%%%%%%%
% Referenser
%%%%%%%%%%%%%%%%%%%%%%%%%
\bibliographystyle{amsalpha}

\begin{thebibliography}{[Si2]}
\bibitem[ACP]{ACP}  J. M. Anderson, J. Clunie, J. \& Ch. Pommerenke, \emph{On Bloch functions and normal functions.} J. Reine Angew. Math. 270 (1974), 12--37

\bibitem[AL]{AL} R. Aulaskari \& P. Lappan. 
{\it Some characterizations of semi-Bloch functions.} Proc. AMS {\bf 
119}:4 (1993), 1233--1238.  

\bibitem[B]{Bergweiler} W. Bergweiler. \emph{A new proof of the Ahlfors five islands theorem.} J. Anal. Math. 76 (1998), 337--347.
 
\bibitem[CK]{Cima} J. A. Cima \& S. G. Krantz. {\it The Lindel\"of 
principle and normal functions of several complex variables.}
Duke Math. J. {\bf 50}:1 (1983), 303--328.
 
\bibitem[C]{Colonna} F. Colonna. {\it Bloch and normal functions and 
their relation.} Rend. Circ. Mat. Palermo (2) {\bf 38} (1989), 
161--180.

\bibitem[JK]{JK} J. E. Joseph \& M. H.  Kwack. \emph{Some classical theorems and families of normal maps in several complex variables.} Complex Variables Theory Appl. 29 (1996), no. 4, 343--362.

\bibitem[JP]{JP} M. Jarnicki \& P. Pflug \emph{Invariant distances and metrics in complex analysis.} de Gruyter Expositions in Mathematics, 9. Walter de Gruyter \& Co., Berlin, 1993.

\bibitem[Ko]{Kob} Sh. Kobayashi \emph{Intrinsic distances, measures and geometric function theory.} 
Bull. Amer. Math. Soc. 82 (1976), no. 3, 357--416. 

\bibitem[Kr1]{KrantzBok} S. G. Krantz. \emph{Geometric Analysis and Function Spaces.} CBMS, \textbf{81}, Amer. Math Soc., Providence, RI., 1993.

\bibitem[Kr2]{KrantzEx}  S. G. Krantz \emph{The Carathé\'eodory and Kobayashi metrics and applications in complex analysis.} Amer. Math. Monthly 115 (2008), no. 4, 304--329.

\bibitem[KM]{KraMa} S. G. Krantz \& D. Ma. {\it Bloch Functions on 
Strongly Pseudoconvex Domains.}
Indiana U.M.J. {\bf 37}:1 (1988), 145--163.

\bibitem[L1]{L} P. Lappan. \emph{A uniform approach to normal families.} 
Travaux de la Conf\'erence Internationale d'Analyse Complexe et du $7^e$ S\'eminaire Roumano-Finlandais. Rev. Roumaine Math. Pures Appl. 39 (1994), no. 7, 691--702.

\bibitem[L2]{Lappan} P. Lappan. \emph{Non-normal sums and products of unbounded normal functions.}
Michigan Math. J. \textbf{8} (1961) 187--192. 

\bibitem[L3]{L2} P. Lappan. \emph{A criterion for a meromorphic function to be normal.} Comment. Math. Helv. \textbf{49} (1974), 492--495.

\bibitem[P]{P} M. R. Pouryayevali. \emph{Holomorphic mappings: Expansion, distortion and approximation.} Thesis (Ph.D.)--Universite de Montreal (Canada). 2000.

\bibitem[R]{R} H. L. Royden \emph{Remarks on the Kobayashi metric.} Several complex variables, II (Proc. Internat. Conf., Univ. Maryland, College Park, Md., 1970), pp. 125--137. Lecture Notes in Math., Vol. 185, Springer, Berlin, 1971. 

\bibitem[S]{Schiff} J. L. Schiff. {\it Normal Families.}
Universitext. Springer-Verlag, New York, 1993.

\bibitem[T1]{T1}  R. M. Timoney, \emph{Bloch functions in several complex variables. I.} Bull. London Math. Soc. \textbf{12} (1980), no. 4, 241--267.

\bibitem[T2]{T2}  R. M. Timoney, \emph{Bloch functions in several complex variables. II.} J. Reine Angew. Math. 319 (1980), 1--22.

\bibitem[Z]{Z} L. Zalcman. \emph{Normal families: new perspectives. }
Bull. Amer. Math. Soc. (N.S.) 35 (1998), no. 3, 215--230. 
\end{thebibliography}

\end{document}